\documentclass[a4paper,12pt]{amsart}
    \usepackage{hyperref}
    \usepackage[T1]{fontenc}
    \usepackage{mathtools,amssymb,amsthm}
    \usepackage{newtxtext,newtxmath}
    \usepackage{microtype}
    \usepackage{enumitem}
    \usepackage[
      a4paper,
      top=2.3cm,
      left=2.5cm,
      right=2.5cm,
      bottom=2.8cm,
      heightrounded,
      centering
    ]{geometry}
    
    \usepackage{multicol} 
    \usepackage[usenames,dvipsnames]{xcolor} 
    \usepackage{tikz, tikz-3dplot, pgfplots}
    \usepackage{tikz-cd}
    \usetikzlibrary[positioning, patterns] 
    \usetikzlibrary{matrix, arrows, decorations.pathmorphing}

    \definecolor{light-gray}{gray}{0.7}

    \usepackage{calligra}
    \usepackage{mathrsfs}
    \DeclareMathAlphabet{\mathcalligra}{T1}{calligra}{m}{n}
    \DeclareFontShape{T1}{calligra}{m}{n}{<->s*[1.5]callig15}{}

    \newtheorem{theorem}{Theorem}[section]

    \newtheorem{lemma}[theorem]{Lemma}
    \newtheorem{proposition}[theorem]{Proposition}

    \theoremstyle{definition}

    \newtheorem{theorem-definition}[theorem]{Theorem-Definition}
    \newtheorem{lemma-definition}[theorem]{Lemma-Definition}

    \numberwithin{equation}{section}
    
    \DeclareMathOperator{\Hilb}{Hilb}
    
    \DeclareMathOperator{\Pic}{Pic}
    \DeclareMathOperator{\Perf}{Perf}
    \DeclareMathOperator{\PBs}{PBs}
    \DeclareMathOperator{\Bs}{Bs}
    \DeclareMathOperator{\Supp}{Supp}
    
    \DeclareMathOperator{\Hom}{Hom}
    \DeclareMathOperator{\rk}{rk}
    
    \newcommand{\Db}{\mathrm D^{\mathrm b}_{\mathrm{coh}}}
    \newcommand{\cA}{\mathcal A}
    \newcommand{\cB}{\mathcal B}
    \newcommand{\cO}{\mathcal O}
    \newcommand{\cE}{\mathcal E}
    \newcommand{\cI}{\mathcal I}
    \newcommand{\cJ}{\mathcal J}
    \newcommand{\cL}{\mathcal L}
    \newcommand{\bP}{\mathbb P}
    \newcommand{\Jbar}{\overline J}
    \newcommand{\Picbar}{\overline{\Pic}}
    
    \newcommand {\cHom}{\mathscr{H}\kern-5pt\mathcalligra{om}}

    \title[Semiorthogonal indecomposability for Hilbert schemes]
    {Semiorthogonal indecomposability for Hilbert schemes of points on integral locally planar curves}
    \author[Qingyuan Jiang]{Qingyuan Jiang}
    \address{Department of Mathematics, The Hong Kong University of Science and Technology,
    Clearwater Bay, Kowloon, Hong Kong}
    \email{jiangqy@ust.hk}
    \author[Xun Lin]{Xun Lin}
    \address{Department of Mathematics, The Hong Kong University of Science and Technology,
    Clearwater Bay, Kowloon, Hong Kong}
    \email{linx@ust.hk}
    
    \subjclass[2020]{Primary 14F08, 14H40; Secondary 14C05, 14D20, 18G80}
    \keywords{Hilbert schemes of points, singular curves, semiorthogonal decompositions, compactified Jacobians, Abel maps, paracanonical base loci}

\begin{document}
    
    \begin{abstract}
    Let $C$ be an integral projective curve of arithmetic genus $g$ with locally planar singularities over an algebraically closed field.
    We prove that for every $1\leq n\leq g-1$, both $\Perf(\Hilb^n(C))$ and $\Db(\Hilb^n(C))$ are semiorthogonally indecomposable.
    We also establish the corresponding relative $S$-linear statement for a flat family of such curves  over a connected base $S$, with admissibility required in the $\Db$ case.
    Our results hold in arbitrary characteristic.
    \end{abstract}

    \maketitle
    
    \section{Introduction}
    Semiorthogonal decompositions often reflect birational constructions. Their nonexistence is therefore a meaningful form of categorical minimality, conjecturally closely related to the birational geometry of the variety. A useful geometric obstruction was introduced by Kawatani and Okawa \cite{KawataniOkawa}: for a smooth projective variety, sections of the canonical bundle strongly constrain the supports of the components of a semiorthogonal decomposition.  Lin refined this point of view by replacing the canonical base locus with the \emph{paracanonical base locus}
    \[
      \PBs|\omega_X|
      :=\bigcap_{L\in\Pic^0(X)(k)}\Bs|\omega_X\otimes L|.
    \]
    In particular, Lin proved that the derived category of the $n$-th symmetric product of a smooth curve of genus $g\geq2$ is indecomposable for $1 \leq n\leq g-1$ \cite{Lin}.  See also \cite{Caucci} for a relative interpretation of this locus through the Albanese morphism. In a related direction, Lin and Yu proved the indecomposability of the bounded derived categories of Brill--Noether varieties for a general smooth curve in the corresponding range \cite{LinYu}.
    
    The purpose of this note is to establish the analogous statement for Hilbert schemes of points on singular curves.  Our main result is the following:
    
    \begin{theorem}\label{thm:main}
    Let $C$ be an integral projective curve of arithmetic genus $g$ with locally planar singularities over an algebraically closed field $k$. For every integer $1\leq n\leq g-1$, neither $\Perf\bigl(\Hilb^n(C)\bigr)$ nor $\Db(\Hilb^n(C))$ admits a nontrivial semiorthogonal decomposition.
    \end{theorem}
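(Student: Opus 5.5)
We adapt the Kawatani--Okawa/Lin argument to the singular setting. The key facts about $X=\Hilb^n(C)$ are these. Since $C$ is integral with locally planar singularities, $X$ is an integral local complete intersection of dimension $n$ (Altman--Iarrobino--Kleiman, Briançon--Granger--Speder), and hence Gorenstein with invertible dualizing sheaf $\omega_X$. It carries the Abel map $\alpha\colon X\to \Jbar^{n}(C)$, $Z\mapsto \cI_Z^\vee$, to the compactified Jacobian. The generalized Jacobian $J=\Pic^0(C)$ acts on $\Jbar^n(C)$.

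\textbf{Step 1: the criterion.} We first set up a version of the paracanonical base locus criterion valid for Gorenstein projective varieties. Suppose $\Perf(X)=\langle\cA,\cB\rangle$. For each line bundle $L$ in a connected family $\{L_t\}$ of deformations of $\omega_X$, a section $s\in H^0(X,L)$ gives a natural transformation $\mathrm{id}\to -\otimes L$. Composing with Serre-type functoriality (for Gorenstein $X$, $-\otimes\omega_X[n]$ is a Serre functor on $\Perf(X)$ relative to $\Db$), the Kawatani--Okawa argument shows the following: for each component, either it is supported on $\bigcap_t\Bs|L_t|$, or every object of the other component is. Connectedness of the family forces a uniform choice.

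For $\Db(X)$ one uses $\Db$-admissible pieces and the same argument. Alternatively, one uses that semiorthogonal decompositions of $\Db$ restrict to $\Perf$ for proper Gorenstein $X$ (Kuznetsov), so it suffices to treat $\Perf$ together with the observation that a component of $\Perf$ generating $\Db$ is all of $\Db$. Since $X$ is integral, a nontrivial component cannot be supported on a proper closed subset when the other component contains $\cO_X$-type objects supported everywhere. So indecomposability follows once we show that the relevant base locus $\PBs|\omega_X|:=\bigcap_{L\in J}\Bs|\omega_X\otimes\alpha^*L|$ is a proper subset of $X$ of codimension, or at least it is not all of $X$, and moreover that it does not contain... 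In practice it suffices to show $\PBs|\omega_X|\neq X$ and that points are not "split". Here we follow Lin's lemma: if $\PBs$ is a proper closed subset, then either $\cA$ or $\cB$ has all objects supported on $\PBs$, and a nonzero subcategory of $\Perf(X)$ supported on a proper closed subset cannot be admissible with complement containing skyscrapers of generic points... this yields triviality.

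\textbf{Step 2: computing $\omega_X$.} We compute $\omega_X\cong\alpha^*\Theta$-type formula: $\omega_X\cong \alpha^*\cO(\text{theta})$ twisted appropriately. Concretely, $X$ can be realized as a projective bundle over $\Jbar^n$ away from the small range, and as the degeneracy locus of a map of vector bundles in general. Using the Poincaré sheaf and the fact that $\omega_C$ is invertible, we identify $H^0(X,\omega_X\otimes\alpha^*L)$ with $H^0$ of sections of the form $\wedge^n$ of $H^0(C,\omega_C\otimes L')$ restricted to $Z$. In particular, $Z\notin\Bs|\omega_X\otimes\alpha^*L|$ whenever $h^0(C,\omega_C\otimes L'(-Z))=g-1-n$, that is, when $Z$ imposes independent conditions on $|\omega_C\otimes L'|$ for a degree-zero $L'$.

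\textbf{Step 3: the main obstacle.} The main obstacle is to show that for every $Z\in X$ with $n\leq g-1$ there is some $L'\in\Pic^0(C)$ with $h^0(\omega_C\otimes L'(-Z))=g-1-n$, which makes $\PBs|\omega_X|$ empty. By Serre duality on the Gorenstein curve, this amounts to $h^0(\cI_Z^\vee\otimes L'^{-1})=0$ for general $L'$, i.e.\ $\cI_Z^\vee\otimes L'^{-1}$ is a non-effective rank-one torsion-free sheaf of degree $n\le g-1$. We try to prove this by a dimension count on $\Jbar^n$: the effective locus is the image of $\Hilb^n(C)$, which has dimension $n<g$. So translating a fixed sheaf by a general element of the $g$-dimensional $J$ avoids it, because the orbit map $J\to\Jbar^n$ is injective, hence has $g$-dimensional image. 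This is where integrality and planarity (irreducibility of $\Jbar$, dimension $g$) enter. With $\PBs=\emptyset$, Step 1 gives triviality of any decomposition for both $\Perf$ and $\Db$.
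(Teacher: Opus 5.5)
There is a genuine gap, and it sits exactly at what you call the main obstacle. In Step~3 the orbit map $\Pic^0(C)\to\Jbar^n$, $L'\mapsto\cL_Z\otimes L'^{-1}$, is \emph{not} injective when $\cL_Z$ is not invertible, i.e.\ when $Z$ fails to be Cartier at some singular point. Take a node $x$ and $Z=\{x\}$ reduced. Then $\cL_Z=\mathcal{H}\!om(\cI_x,\cO_C)\simeq\nu_*\cO_{C'}$ for the partial normalization $\nu\colon C'\to C$ at $x$. By the projection formula, $\cL_Z\otimes L\simeq\cL_Z$ for every $L\in\ker\bigl(\Pic^0(C)\to\Pic^0(C')\bigr)\simeq\mathbb{G}_m$. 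Now take a rational curve with $g$ nodes and let $Z$ be the reduced union of $n$ of them. The stabilizer is then $\mathbb{G}_m^{n}$, and the orbit is a copy of $\Pic^0(C_Z)$ of dimension $g-n$, where $C_Z$ is the partial normalization at those nodes. As soon as $n\geq g/2$ (already for $g=2$, $n=1$), comparing with $\dim a_n(C^{[n]})\leq n$ proves nothing. The vanishing you need, $H^0(C,\cL_Z\otimes M)=0$ for some $M\in\Pic^0(C)(k)$, is true, but it requires a cohomological argument rather than a dimension count. Apply Lemma~\ref{lem:translation} to $\cL_Z((g-1-n)p)$ and use $\cL_Z\otimes M\subseteq\cL_Z((g-1-n)p)\otimes M$. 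That lemma (after Altman--Kleiman) first raises the degree to $2g-2$. It then chooses $M_0$ with $G_0\otimes M_0\not\simeq\omega_C$, so that $H^1=0$ by Serre duality, since a nonzero map between rank-one torsion-free sheaves of equal degree is an isomorphism. Finally it removes general smooth points one at a time, which preserves $H^1=0$, and absorbs them into a degree-zero line bundle.

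Step~2 is asserted rather than proved, and the sketch does not apply. The projective-bundle description of the Abel map holds only for $n\geq 2g-1$, not in the range $n\leq g-1$, and no degeneracy-locus computation is given. Your ``$\wedge^n H^0(C,\omega_C\otimes L')$'' description amounts to $\omega_{C^{[n]}}\otimes P\simeq\det\bigl((\omega_C\otimes L')^{[n]}\bigr)$ for some $P\in\Pic^0(C^{[n]})$. This is in fact correct: it follows from Proposition~\ref{prop:canonical} by Grothendieck duality and a determinant-of-cohomology computation. On the singular scheme $C^{[n]}$, however, it must be derived from an actual formula for $\omega_{C^{[n]}}$. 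The missing idea is the derived projectivization $C^{[n]}\simeq\bP_{\Jbar}(\cE_n)$, which is classical even though $\rk\cE_n=n+1-g\leq 0$. Its Euler triangle, together with $\omega_{\Jbar}\simeq\cO_{\Jbar}$, $\cO(1)\simeq\cO(D_p)$ and $\det\cE_n\simeq\cO(\Theta_p)$, gives Proposition~\ref{prop:canonical}. After that, $\sigma_p^{g-1-n}\cdot a_n^*t_M^*\vartheta_p$ with $p\notin\Supp(Z)$ is the desired section. Also, ``$\alpha^*L$ for $L\in J$'' is not meaningful, since $L$ lives on $C$. The twists are $a_n^*\bigl(t_M^*\cO(\Theta_p)\otimes\cO(\Theta_p)^{-1}\bigr)$, and you must check they lie in $\Pic^0(C^{[n]})$; the paper does this using the family over the connected group $\Pic^0(C)$. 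Finally, Step~1 should read ``$\PBs|\omega_X|=\varnothing$ implies indecomposability''. The condition $\PBs|\omega_X|\neq X$ does not suffice, because a nonzero component can be supported on a proper closed subset. Its proof rests on three ingredients: Kawatani--Okawa's invariance of components under $\Pic^0$-twists, Spence's Koszul zero-cycles (skyscrapers are not perfect on singular $X$), and Kuznetsov--Shinder's Corollary~6.6 to pass from $\Perf$ to $\Db$.
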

    
    Theorem~\ref{thm:main} settles the absolute case of the indecomposability question posed in \cite[Section~4.2.3]{JiangAbel}. In families, we establish the corresponding $S$-linear statement for $\Perf$, together with its analogue for admissible $S$-linear decompositions of $\Db$; see Theorem~\ref{thm:relative}.
    
    The stated range is empty when $g\leq1$.  Hence, for the remainder of this paper, we may and do assume $g\geq2$. 
    
    There are two points to address.  
    First, $\Hilb^n(C)$ is generally singular. Lin observed that Spence's Cohen--Macaulay argument, together with the rigidity of semiorthogonal components under algebraically trivial twists, gives the corresponding singular paracanonical support theorem \cite[Section~3.4]{Lin}. We record the empty-locus case in Proposition~\ref{prop:singular-criterion}, emphasizing that the argument is valid in arbitrary characteristic.
    
    Second, one needs enough paracanonical sections on the Hilbert scheme.  Fix a smooth point $p\in C$, let $\Jbar=\Picbar^0(C)$, and choose a universal rank-one torsion-free sheaf $\cJ$ on $C\times\Jbar$, rigidified along $p\times\Jbar$.  The Abel-map construction of \cite{JiangAbel} (see also \cite{JiangLeung} for the smooth case), based on derived projectivizations of  complexes \cite{JiangProjectivizations} , identifies $\Hilb^n(C)$ with the derived projectivization of the perfect complex
    \[
      \cE_n
      :=\left(Rq_*\bigl(\cJ\otimes r^*\cO_C(np)\bigr)\right)^\vee
    \]
    on $\Jbar$, where $q$ and $r$ are the two projections.  Although $\cE_n$ can have negative virtual rank, its derived projectivization is classical when $C$ is locally planar.  This derived description contains information that is not easily visible from a purely classical perspective.  It yields the formula
    \begin{equation}\label{eq:intro-canonical}
      \omega_{\Hilb^n(C)}
      \simeq
      \cO_{\Hilb^n(C)}\bigl((g-1-n)D_p\bigr)
      \otimes a_n^*\cO_{\Jbar}(\Theta_p),
    \end{equation}
    where $D_p$ is the divisor of subschemes containing $p$, $a_n$ is the Abel map, and $\Theta_p$ is the theta divisor.
    
    Given $Z\in\Hilb^n(C)$, we choose $p\notin\Supp(Z)$.  A cohomological argument, modeled on \cite[Proposition~(3.5)]{AltmanKleiman}, translates $\Theta_p$ so that it avoids $a_n(Z)$.  Multiplying the translated theta section by the canonical section of $(g-1-n)D_p$ produces a paracanonical section nonvanishing at $Z$.  Consequently
    \[
      \PBs\bigl|\omega_{\Hilb^n(C)}\bigr|=\varnothing,
    \]
    and the singular paracanonical criterion proves that $\Perf\bigl(\Hilb^n(C)\bigr)$ is semiorthogonally indecomposable.

    The passage from $\Perf$ to $\Db$ uses a result of Kuznetsov and Shinder: for every projective scheme over a perfect field, $\Perf(X)$ is semiorthogonally indecomposable if and only if $\Db(X)$ is \cite[Corollary~6.6]{KuznetsovShinder}. 
    Since every algebraically closed field is perfect, this completes the proof of Theorem~\ref{thm:main}.
    
    \medskip
    The result is sharp.  Indeed, the locally planar specialization of the semiorthogonal decomposition in \cite{JiangAbel} gives, for $n\geq g$,
    \[
      \mathrm D \bigl(\Hilb^n(C)\bigr)
      =\left\langle
          \mathrm D\bigl(\Hilb^{2g-2-n}(C)\bigr),  \text{$(1-g+n)$ copies of } \mathrm D\bigl(\Picbar^n(C)\bigr)
        \right\rangle,
    \]
    with the usual convention that $\Hilb^{m}(C)$ is empty for $m<0$.
    
    No characteristic-zero assumption is required in our proof.  The conditions in the classical results on compactified Jacobians, the projectivization result for Abel maps \cite{JiangAbel}, the Kawatani--Okawa theorem \cite{KawataniOkawa}, Spence's argument (see Proposition~\ref{prop:singular-criterion}), and the fiberwise criterion (Lemma~\ref{lem:fiberwise-criterion}) are all valid over an algebraically closed field of arbitrary characteristic.  
    
    \subsection*{Acknowledgement}
    This work was supported by the Hong Kong Research Grants Council ECS grant (Grant No. 26311724) and GRF grant (Grant No. 16311625).

    \section{Abel maps as projectivizations}\label{sec:abel}
    
    Throughout this section, $C$ is as in Theorem~\ref{thm:main}. In particular, $C$ is Gorenstein.  The Hilbert scheme $C^{[n]}:=\Hilb^n(C)$ is an integral projective local complete intersection of dimension $n$; see \cite[Theorem~(6)]{AltmanIarrobinoKleiman}.  We use Grothendieck's quotient convention for projectivizations.
    
    Fix a smooth point $p\in C_{\mathrm{sm}}(k)$, set $\Jbar=\Picbar^0(C)$, and let $q:C\times\Jbar\to\Jbar$ and $r:C\times\Jbar\to C$ be the projections. Since $C$ is integral, $\Jbar$ is a fine compactified Jacobian; in particular, after rigidification along $p\times\Jbar$, it carries a universal sheaf $\cJ$. Set
    \begin{equation}\label{eq:En}
      \cE_n
      :=\left(Rq_*\bigl(\cJ\otimes r^*\cO_C(np)\bigr)\right)^\vee
      \in\Perf(\Jbar).
    \end{equation}
    By \cite[Corollary~2.10]{JiangAbel}, it is a perfect complex, whose rank is given by Riemann--Roch:
    \begin{equation}\label{eq:rank}
      \rk(\cE_n)=n+1-g.
    \end{equation}
    (More precisely, $\cJ\otimes r^*\cO_C(np)$ is $q$-flat, hence perfect relative to $\Jbar$.  Thus \cite[Corollary~2.10]{JiangAbel}, applied to $(\cO_{C\times\Jbar},\cJ\otimes r^*\cO_C(np))$, gives a perfect connective $\mathcal Q$-complex, of Tor-amplitude contained in $[0,1]$ in the homological convention of \cite{JiangAbel}, and canonical equivalences
    \[
      Rq_*\bigl(\cJ\otimes r^*\cO_C(np)\bigr)
      \simeq   \mathcal Q_q\bigl(\cO_{C\times\Jbar}, \cJ\otimes r^*\cO_C(np)\bigr)^\vee,
      \qquad
      \cE_n\simeq  \mathcal Q_q\bigl(\cO_{C\times\Jbar},
          \cJ\otimes r^*\cO_C(np)\bigr).)
          \]
    
    For a length-$n$ subscheme $Z\subset C$, let $\cI_Z$ be its ideal sheaf and set $\cL_Z:=\mathcal{H}\!om(\cI_Z,\cO_C)$.  Since $C$ is Gorenstein, $\cL_Z$ is rank-one torsion-free of degree $n$.  We normalize the Abel map as
    \begin{equation}\label{eq:abel}
      a_n \colon C^{[n]}\to\Jbar,
      \qquad
      Z\longmapsto\cL_Z(-np).
    \end{equation}
    The construction of \cite[Section~4.2.2 and Corollary~4.6]{JiangAbel}, refining the classical description in \cite[Lemma~(5.17)]{AltmanKleiman}, gives an isomorphism
    \begin{equation}\label{eq:projectivization}
      C^{[n]}\simeq\bP_{\Jbar}(\cE_n)
    \end{equation}
    over $\Jbar$.  Here, $\bP_{\Jbar}(\cE_n)$ is classical for all $n\ge 0$ (\cite[Lemma~4.7]{JiangAbel}).
    
    Let
    \[
      D_p:=\{Z\in C^{[n]}:p\in Z\}.
    \]
    Because $p$ is smooth, this is the effective Cartier divisor obtained by adding $p$ to a length-$(n-1)$ subscheme.  The rigidification of $\cJ$ and the exact sequence
    \[
      0\to\cO_C((n-1)p)
       \to\cO_C(np)
       \to k(p)\to 0
    \]
    gives a distinguished triangle
    \begin{equation}\label{eq:triangle}
    \cO_{\Jbar}\to \cE_n\to\cE_{n-1}
      \to \cO_{\Jbar}[1].
    \end{equation}
    The composite of the first arrow with the universal quotient on $\bP(\cE_n)$ is a section of $\cO_{\bP(\cE_n)}(1)$.  Its vanishing locus is $\bP(\cE_{n-1})$ (\cite[Corollary~4.32]{JiangProjectivizations}), which is $D_p$ under \eqref{eq:projectivization}.  Hence
    \begin{equation}\label{eq:O1}
      \cO_{\bP(\cE_n)}(1)\simeq\cO_{C^{[n]}}(D_p).
    \end{equation}
    
    Taking determinants of \eqref{eq:triangle} yields $\det(\cE_n)\simeq\det(\cE_{n-1})$.  When $n=g-1$, the complex $\cE_{g-1}$ has virtual rank zero, and its determinant of cohomology carries a canonical theta section. Let
    \begin{equation}\label{eq:theta}
      \cO_{\Jbar}(\Theta_p):=\det(\cE_{g-1}),
      \quad \text{where} \quad
      \Theta_p
      =\left\{[I]\in\Jbar:
          H^0\bigl(C,I((g-1)p)\bigr)\neq0
        \right\}.
    \end{equation}
    To check the determinant convention, locally represent $Rq_*(\cJ\otimes r^*\cO_C((g-1)p))$ by a two-term complex $[K^{0}\to K^{1}]$ of vector bundles of equal rank. Its derived dual represents $\cE_{g-1}$, so
    \[
      \det(\cE_{g-1})
      =\det(K^{1})\otimes\det(K^{0})^{-1},
    \]
    and the determinant of $K^{0}\to K^{1}$ is a section of this line bundle whose zero locus is precisely the locus where $H^0$ (equivalently $H^{1}$) is nonzero.
    
    The generalized theta divisor is Cartier; see \cite{Soucaris}.  We obtain
    \begin{equation}\label{eq:detEn}
      \det(\cE_n)\simeq\cO_{\Jbar}(\Theta_p)
      \qquad\text{for all }n\geq0.
    \end{equation}
    
    \begin{proposition}\label{prop:canonical}
    For every $n\geq1$, the dualizing line bundle of $C^{[n]}$ is
    \begin{equation}\label{eq:canonical}
      \omega_{C^{[n]}}
      \simeq
      \cO_{C^{[n]}}\bigl((g-1-n)D_p\bigr)
      \otimes a_n^*\cO_{\Jbar}(\Theta_p).
    \end{equation}
    \end{proposition}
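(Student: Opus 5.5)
We will compute the relative dualizing sheaf of the projectivization $\pi\colon \bP_{\Jbar}(\cE_n)\to\Jbar$ and combine it with the dualizing sheaf of $\Jbar$. The first input is that $\Jbar$ has trivial dualizing sheaf. Since $C$ is integral with locally planar singularities, $\Jbar$ is an integral local complete intersection, hence Gorenstein. Its dualizing sheaf is $\omega_{\Jbar}\simeq\cO_{\Jbar}$. This follows from the classical results of Altman--Iarrobino--Kleiman and Kleiman--Kleppe. Alternatively, one can use the group action of $\Pic^0(C)$, which acts on $\Jbar$ with a dense open orbit whose complement has codimension $\ge 2$; there $\omega$ is trivialized by translation-invariant forms, and triviality extends by reflexivity on a normal Gorenstein scheme. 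We will simply cite this. Consequently $\omega_{C^{[n]}}\simeq\omega_{\pi}$, where $\omega_\pi$ is the relative dualizing sheaf of $\pi$. This uses that $\pi$ is a morphism of lci schemes, hence lci itself, so that $\omega_{C^{[n]}}\simeq\omega_\pi\otimes\pi^*\omega_{\Jbar}$.

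The second step is the relative Euler sequence for the derived projectivization. For $\bP(\cE)$ with $\cE$ perfect of Tor-amplitude $[0,1]$ and universal quotient $\pi^*\cE\to\cO(1)$, the relative cotangent complex is
\[
L_{\pi}\simeq \mathrm{fib}\bigl(\pi^*\cE\otimes\cO(-1)\to\cO\bigr),
\]
as in \cite{JiangProjectivizations}. Because $\bP(\cE_n)$ is classical of the expected dimension, this is quasi-smooth and agrees with the classical lci cotangent complex. Hence
\[
\omega_\pi\simeq\det L_\pi\simeq\det(\pi^*\cE_n)\otimes\cO(-\rk\cE_n),
\]
where the virtual rank is $\rk\cE_n=n+1-g$ by \eqref{eq:rank}. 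Here one must be careful with the convention that $\omega_\pi=\det(L_\pi)$ for quasi-smooth lci morphisms; the shift by the relative dimension does not change the line bundle.

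The third step substitutes the earlier identifications. Using \eqref{eq:detEn}, namely $\det\cE_n\simeq\cO(\Theta_p)$, we get $\det(\pi^*\cE_n)\simeq a_n^*\cO_{\Jbar}(\Theta_p)$. Using \eqref{eq:O1}, namely $\cO(1)\simeq\cO(D_p)$, we get $\cO(-(n+1-g))=\cO((g-1-n)D_p)$. Combining these gives \eqref{eq:canonical}.

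The main obstacle is the second step: justifying the determinant formula for a projectivization of a complex of possibly negative virtual rank, and checking the sign conventions of the Grothendieck quotient normalization. As a sanity check we will verify the formula in the vector bundle case. For $n\ge 2g-1$, $\cE_n$ is a bundle of rank $r=n+1-g$, and the classical formula gives $\omega_\pi=\pi^*\det\cE\otimes\cO(-r)$, which is consistent. For the general case we will rely on the cotangent complex computation in \cite{JiangProjectivizations}. If needed, we would instead present $\cE_n$ locally as $[V^{1}\to V^{0}]$. Then $\bP(\cE_n)$ is the zero locus in $\bP(V^0)$ of the induced section of $V^{1\vee}\otimes\cO(1)$, which is regular by the dimension count. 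Adjunction then gives $\omega=\det V^0\otimes(\det V^1)^{-1}\otimes\cO(-\rk V^0+\rk V^1)$. These local identifications glue, since the result equals $\det\cE_n\otimes\cO(-\rk\cE_n)$ canonically.
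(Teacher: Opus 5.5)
Your proof is correct and follows the paper's argument: triviality of $\omega_{\Jbar}$, determinants of the cotangent-complex triangle for $a_n\colon C^{[n]}\to\Jbar$, the Euler triangle for the derived projectivization giving $\det\mathbb{L}_{\bP(\cE_n)/\Jbar}\simeq a_n^*\det(\cE_n)\otimes\cO_{\bP(\cE_n)}(-\rk\cE_n)$, and then substitution of \eqref{eq:rank}, \eqref{eq:O1} and \eqref{eq:detEn}; your local regular-section/adjunction check is a valid supplement. Two side claims need fixing: your ``alternative'' proof that $\omega_{\Jbar}\simeq\cO_{\Jbar}$ fails, since the boundary $\Jbar\setminus\Pic^0(C)$ is a divisor as soon as $C$ has a node and $\Jbar$ need not be normal (for a nodal cubic $\Jbar\simeq C$), so simply cite the result (the paper uses Melo--Rapagnetta--Viviani, Theorem~A); and a morphism between lci schemes need not be lci, so the lci property of $C^{[n]}\to\Jbar$ must come, as you later note, from quasi-smoothness of the classical derived projectivization.
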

    
    \begin{proof}
    The compactified Jacobian $\Jbar$ is a local complete intersection of dimension $g$ with trivial dualizing sheaf (this holds in arbitrary characteristic; see, e.g., \cite[Theorem~A]{MRV17}). 
    Thus $\det \mathbb{L}_{\Jbar} \simeq \omega_{\Jbar} \simeq \cO_{\Jbar}$.
    Taking determinants of the exact triangle of cotangent complexes gives
    \[
    \det\mathbb{L}_{C^{[n]}}
      \simeq
      a_n^*\det\mathbb{L}_{\Jbar}
      \otimes
      \det\mathbb{L}_{C^{[n]}/\Jbar}.
    \]
    From the Euler triangle for a derived projectivization \cite[Theorem 4.27]{JiangProjectivizations}, we obtain
    \[
      \det\mathbb {L}_{\bP(\cE_n)/\Jbar}
      \simeq a_n^*\det(\cE_n) \otimes\cO_{\bP(\cE_n)}\bigl(-\rk(\cE_n)\bigr).
    \]
    Since the derived projectivization is the classical local complete intersection variety $C^{[n]}$, $\det \mathbb{L}_{C^{[n]}} \simeq \omega_{C^{[n]}}$.  The result now follows from \eqref{eq:rank}, \eqref{eq:O1}, and \eqref{eq:detEn}.
    \end{proof}
    
    We next record a cohomological fact that allows us to move any relevant sheaf off the theta divisor. It is a reformulation of the argument in \cite[Proposition~(3.5)]{AltmanKleiman}.
    
    \begin{lemma}\label{lem:translation}
    Let $F$ be a rank-one torsion-free sheaf on $C$ of degree $e$, where $g-1\leq e\leq2g-2$.  There is a line bundle $M\in\Pic^0(C)(k)$ such that
    \[
      H^1(C,F\otimes M)=0 \qquad\text{and}\qquad h^0(C,F\otimes M)=e+1-g.
    \]
    In particular, if $e=g-1$, then $H^0(C,F\otimes M)=0$.
    \end{lemma}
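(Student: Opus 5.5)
Since $h^0-h^1=\chi(F\otimes M)=e+1-g$ by Riemann--Roch, and $\chi$ depends only on degree and rank, the two conclusions are equivalent. It therefore suffices to find $M\in\Pic^0(C)(k)$ with $H^1(C,F\otimes M)=0$.

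By Serre duality on the Gorenstein curve $C$,
\[
H^1(C,F\otimes M)^\vee\simeq\Hom(F\otimes M,\omega_C)\simeq H^0\bigl(C,\cHom(F,\omega_C)\otimes M^{-1}\bigr).
\]
The sheaf $G:=\cHom(F,\omega_C)$ is rank-one torsion-free of degree $2g-2-e\in[0,g-1]$. The task becomes: find $N=M^{-1}\in\Pic^0(C)$ with $H^0(G\otimes N)=0$.

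I would argue as in \cite[Proposition~(3.5)]{AltmanKleiman}, by a dimension count. Suppose $H^0(G\otimes N)\neq0$ for every $N\in\Pic^0(C)(k)$. A nonzero section $s\colon\cO_C\to G\otimes N$ is injective, since $C$ is integral and $G$ is torsion-free. Its cokernel is a finite-length sheaf of length $d:=\deg G\le g-1$. Dualizing, $(G\otimes N)^\vee\hookrightarrow\cO_C$ is an ideal $\cI_Z$ of a length-$d$ subscheme $Z$. Hence $G^\vee\otimes N^{-1}\simeq\cI_Z$, where $G^\vee=\cHom(G,\cO_C)$ and reflexivity holds because $C$ is Gorenstein.

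This shows that the map
\[
\Hilb^d(C)\to\Picbar^{-d}(C),\qquad Z\mapsto\cI_Z,
\]
has image containing the whole orbit $\{G^\vee\otimes N^{-1}:N\in\Pic^0(C)\}$. The action of $\Pic^0(C)$ on $\Picbar^{-d}(C)$ by tensoring has finite stabilizers: $L\otimes N\simeq L$ with $L$ rank one forces $N^{\otimes r}$ trivial after taking determinants on the normalization generically, or more simply the stabilizer is a proper closed subgroup. So the orbit has dimension $g=\dim\Pic^0(C)$. On the other hand $\dim\Hilb^d(C)=d\le g-1<g$, by \cite[Theorem~(6)]{AltmanIarrobinoKleiman}, which gives a contradiction.

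The main obstacle is justifying that the orbit has dimension $g$, i.e.\ that the stabilizer of a rank-one torsion-free sheaf is finite. For this I would use the fact that $N\otimes L\simeq L$ implies $N\simeq\cHom(L,L)\cap\ldots$. Concretely, $\cHom(L,L)=\cO_{C'}$ for a partial normalization $C'\to C$, and $N$ must pull back trivially to $C'$. The kernel of $\Pic^0(C)\to\Pic^0(C')$ is affine but possibly positive-dimensional, so I need a cleaner route.

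The alternative I would try first avoids orbits altogether: fix a general smooth point and use the classical bound directly. For each $N$ the condition $H^0\neq0$ yields an effective "divisor" $Z$ with $\cI_Z^\vee\simeq G\otimes N$. Thus the Abel map $a_d\colon\Hilb^d(C)\to\Picbar^{d}(C)$ (as in \eqref{eq:abel}) would contain the $\Pic^0(C)$-translates of $G$, hence the dense open $\Pic^{d}$-part if $G$ is invertible. In general it would contain a subset meeting $\Pic^0$-orbits, each of dimension at least the dimension of $\Pic^0$ modulo stabilizer. Since $\Picbar^d(C)$ is irreducible of dimension $g$ with $\Pic^d(C)$ dense \cite{AltmanIarrobinoKleiman}, the cleanest fix is to reduce to $G$ invertible. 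I would do this by deforming $F$ inside $\Picbar^e(C)$: the locus of $F$ admitting a good $M$ is open by semicontinuity of $h^1$, and $\Pic^0$-stable. It contains the line bundles because $\dim a_d(\Hilb^d)\le d<g$. So it is a nonempty open $\Pic^0$-stable subset, and every $\Pic^0$-orbit closure must meet it. Pinning down that last point is where I expect to spend the effort.
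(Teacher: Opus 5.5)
Your reductions are sound. By Riemann--Roch only $H^1(C,F\otimes M)=0$ matters, and Serre duality turns this into finding $N\in\Pic^0(C)(k)$ with $H^0(C,G\otimes N)=0$ for $G=\mathcal{H}\!om(F,\omega_C)$ of degree $d\le g-1$. For invertible $F$ your count is also correct: the $\Pic^0$-orbit of $G$ is all of $\Pic^d(C)$, while the image of the Abel map from $\Hilb^d(C)$ has dimension at most $d<g$.

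The gap is the non-invertible case. This is exactly the case needed later: $F_Z=\cL_Z((g-1-n)p)$ in Proposition~\ref{prop:pbs-empty} is non-invertible as soon as $Z$ fails to be Cartier at a singular point. Neither of your routes handles it.

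\emph{The orbit count fails, as you suspected.} On a Gorenstein curve a non-invertible $G$ has $\mathcal{E}nd(G)=\cO_{C'}$ for a proper partial normalization $C'\to C$. Hence $G\otimes N\simeq G$ for every $N$ in the positive-dimensional affine group $\ker(\Pic^0(C)\to\Pic^0(C'))$. The orbit then has dimension only $p_a(C')<g$, which need not exceed $d=\dim\Hilb^d(C)$; for example, take one node and $d=g-1$.

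\emph{The semicontinuity route is circular.} Your set $U$ is indeed open, $\Pic^0$-stable, and contains $\Pic^e(C)$. But the non-invertible locus of $\Picbar^e(C)$ is closed and $\Pic^0$-stable, so the orbit closure of a non-invertible sheaf never meets $\Pic^e(C)$. Moreover, for an open $\Pic^0$-stable $U$, saying that every orbit closure meets $U$ is equivalent to $U=\Picbar^e(C)$, which is the lemma itself.

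The missing idea is to replace dimension counting by a rigidity statement in extreme degree.
\begin{enumerate}
\item The paper sets $m=2g-2-e$ and $G_0=F(mp)$, of degree $2g-2$.
\item A nonzero map $G_0\otimes M_0\to\omega_C$ between rank-one torsion-free sheaves of equal degree is an isomorphism. So $H^1(C,G_0\otimes M_0)=0$ unless $G_0\otimes M_0\simeq\omega_C$. This excludes at most one $M_0$, and none at all when $F$ is non-invertible.
\item One then lowers the degree one smooth point at a time. If $H^1(C,G)=0$ and $\deg G\ge g$, choose a nonzero section and a smooth point $x$ where it does not vanish. Then $H^0(C,G)\to k(x)$ is surjective, so $H^1(C,G(-x))=0$.
\item After $m$ steps, $M=M_0\otimes\cO_C(mp-x_1-\cdots-x_m)$ has degree $0$ and lies in $\Pic^0(C)(k)$ since $C$ is integral.
\end{enumerate}
This argument treats invertible and non-invertible $F$ uniformly. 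In your dual formulation it reads: in degree $0$, $H^0(C,G\otimes N)\neq0$ forces $G\otimes N\simeq\cO_C$, and one then adds smooth points while preserving $H^0=0$.
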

    
    \begin{proof}
    Let $m=2g-2-e$ and $G_0=F(mp)$.  Choose $M_0\in\Pic^0(C)(k)$ such that $G_0\otimes M_0\not\simeq\omega_C$. Such a choice is possible: if $G_0$ is not invertible, no line-bundle twist of it is isomorphic to $\omega_C$. If it is invertible, the unique excluded choice is $M_0\simeq\omega_C\otimes G_0^{-1}$; since $g\geq 2$, the group $\Pic^0(C)(k)$ has more than one point, so another choice exists.
    
    Serre duality yields
    \[
      H^1(C,G_0\otimes M_0)^\vee  \simeq\Hom(G_0\otimes M_0,\omega_C).
    \]
    A nonzero map between rank-one torsion-free sheaves of the same degree is an isomorphism: it is injective at the generic point, and its zero-dimensional cokernel has length equal to the difference of the degrees.  It follows that $H^1(C,G_0\otimes M_0)=0$.  This is also the degree-$(2g-2)$ case of \cite[Proposition~(3.5)(iii)(g)]{AltmanKleiman}.
    
    Suppose now that $G$ is rank-one torsion-free, $H^1(C,G)=0$, and $\deg G\geq g$.  Riemann--Roch gives $h^0(C,G)>0$.  Choose a nonzero section and then a smooth point $x$ where it does not vanish.  From
    \[
      0\to G(-x)\to G\to k(x)\to0
    \]
    the evaluation map $H^0(C,G)\to k(x)$ is surjective, and therefore $H^1(C,G(-x))=0$.
    
    Starting with $G_0\otimes M_0$, repeat this step $m$ times.  We obtain smooth points $x_1,\ldots,x_m$ such that
    \[
      H^1\bigl(C,F(mp)\otimes M_0(-x_1-\cdots-x_m)\bigr)=0.
    \]
    The line bundle
    \(
      M:=M_0\otimes\cO_C(mp-x_1-\cdots-x_m)
    \)
    has degree zero, hence lies in $\Pic^0(C)(k)$ since $C$ is integral, and $H^1(C,F\otimes M)=0$.  The formula for $h^0$ follows from Riemann--Roch.
    \end{proof}

    \section{Paracanonical sections and indecomposability}
    \label{sec:paracanonical}
    
    We first establish the singular version of Lin's criterion in \cite{Lin} that will be used below. For a projective Gorenstein variety $X$, define
    \[
      \PBs|\omega_X| :=\bigcap_{L\in\Pic^0(X)(k)}\Bs|\omega_X\otimes L|.
    \]
    Thus $x\notin\PBs|\omega_X|$ precisely when some algebraically trivial twist of $\omega_X$ has a section nonvanishing at $x$.
    
    \begin{proposition}[{Lin--Spence's criterion; cf. \cite{Lin}, \cite{Spence}}]\label{prop:singular-criterion}
    Let $X$ be an integral projective Gorenstein variety over an algebraically closed field. If $\PBs|\omega_X|=\varnothing$, then neither $\Perf(X)$ nor $\Db(X)$ admits a nontrivial semiorthogonal decomposition.
    \end{proposition}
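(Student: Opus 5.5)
We reduce to $\Perf(X)$ first, and pass to $\Db(X)$ at the end via Kuznetsov--Shinder \cite[Corollary~6.6]{KuznetsovShinder}. That result says that for a projective scheme over a perfect field, $\Perf(X)$ is semiorthogonally indecomposable if and only if $\Db(X)$ is. It applies here because $k$ is algebraically closed.

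So suppose $\Perf(X)=\langle\cA,\cB\rangle$ is a semiorthogonal decomposition. The plan is to show that one of the two components is zero, in four steps.

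\emph{Step 1 (twisting by $\Pic^0$).} Each $L\in\Pic^0(X)(k)$ is algebraically equivalent to $\cO_X$, so the autoequivalence $-\otimes L$ lies in a connected family containing the identity. Following the rigidity argument Lin uses in \cite[Section~3.4]{Lin}, I would show that $\cA\otimes L=\cA$ and $\cB\otimes L=\cB$. The projection functors onto the components form a flat family over the connected group $\Pic^0(X)$. The decomposition is determined by the kernel of the projection functor, and the locus where the twisted decomposition agrees with the original one is both open and closed. This only uses that the components of a semiorthogonal decomposition are rigid under deformations, which holds in any characteristic.

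\emph{Step 2 (Serre duality and the vanishing of morphisms).} Since $X$ is Gorenstein, $\omega_X$ is a line bundle. Grothendieck duality gives
\[
\Hom(E,F\otimes\omega_X[\dim X])\simeq\Hom(F,E)^\vee
\]
for $E,F\in\Perf(X)$. Take $A\in\cA$ and $B\in\cB$, and fix $L\in\Pic^0(X)(k)$. By semiorthogonality $\Hom(B,A[i])=0$ for all $i$. Duality then gives
\[
\Hom\bigl(A,B\otimes\omega_X\otimes L^{-1}[i]\bigr)=0 \quad\text{for all } i,
\]
once we use Step 1 to see that $B\otimes L^{-1}\in\cB$. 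Now let $s\in H^0(\omega_X\otimes L)$. Any morphism $f\colon A\to B\otimes L^{-1}$ is killed by composition with $s$, that is, $s\circ f=0$ in $\Hom(A,B\otimes\omega_X)$. Varying $L$ and $s$, this shows that such morphisms are supported on $\PBs|\omega_X|$, which is empty by hypothesis.

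\emph{Step 3 (support argument).} This is the main obstacle: in the smooth case Kawatani--Okawa rely on properties of $\Db$ that fail for singular $X$. Here I would follow Spence's argument \cite{Spence}. Take a point $x\in X$ and use a Cohen--Macaulay local computation at $x$. Since $s(x)\neq0$ for a suitable paracanonical section $s$, multiplication by $s$ is an isomorphism near $x$. The aim is to deduce that for every $A\in\cA$ and $B\in\cB$, $\Supp A\cap\Supp B=\varnothing$. The simplest test case is $A$ and $B$ with nonzero local $\Hom$ at a common point of their supports: one would use the $\cHom$ sheaf and a nonvanishing local morphism to produce a global nonzero morphism after twisting by a sufficiently ample power, and then obtain a contradiction through the vanishing of Step 2. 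Perfect complexes on a Cohen--Macaulay scheme have nonzero $\cHom$ at common support points, which provides the needed local input.

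\emph{Step 4 (integrality).} Since $X$ is integral, it is connected. The supports of objects in $\cA$ and in $\cB$ are closed, disjoint, and together cover $X$, because $\cO_X$ decomposes into a piece from each component. Connectedness forces one of the two to be empty. Hence one component contains no nonzero objects, so the decomposition is trivial.
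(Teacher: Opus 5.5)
Your Steps 1 and 2 and the passage to $\Db$ via Kuznetsov--Shinder agree with the paper, which simply cites Kawatani--Okawa, Theorem 3.9, for Step 1. Step 3, however, carries the whole argument, and it does not work as proposed. The vanishing in Step 2 applies only to morphisms $A\to B'[i]$ with $A\in\cA$ and $B'\in\cB$, because it rests on $\Hom(B'\otimes L,A[\ast])=0$. The component $\cB$ is preserved by twists from $\Pic^0(X)$, but not by twists by an ample line bundle. So the global morphism $A\to B(m)[i]$ you obtain from a nonzero local section of $R\cHom(A,B)$ has target outside $\cB$, and Step 2 gives no contradiction.

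Moreover, Step 2 by itself only says that each global morphism $A\to B[i]$ vanishes near every point outside $\PBs|\omega_X|$. This does not force the morphism to be zero in $\Perf(X)$, since $\Hom$ in the derived category is not a sheaf. It also says nothing about local morphisms $A|_V\to B|_V[i]$ that are not restrictions of global morphisms. Hence $\Supp A\cap\Supp B=\varnothing$ is not established, and Step 4 has no input.

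The missing idea, used by the paper following Kawatani--Okawa and Spence, is to apply Step 2 to one well-chosen global morphism. Take the connecting morphism $f\colon A_x\to B_x[1]$ in the decomposition triangle $B_x\to\cO_{Z_x}\to A_x\to B_x[1]$ of a Koszul zero-cycle $Z_x$ at $x$; it is perfect because $X$ is Cohen--Macaulay.

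Since $f$ vanishes near $x$, $\cO_{Z_x}$ splits locally as $A_x\oplus B_x$. Because $\End(\cO_{Z_x})$ is a local ring, one summand vanishes near $x$. By disjointness of supports, the corresponding map $B_x\to\cO_{Z_x}$ or $\cO_{Z_x}\to A_x$ is then zero. That makes $B_x[1]$ a summand of $A_x$, or $A_x[-1]$ a summand of $B_x$, hence an object of $\cA\cap\cB=0$. So every such $\cO_{Z_x}$ lies in $\cA$ or in $\cB$.

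Integrality then puts all of them in the same component. Semiorthogonality against the $\cO_{Z_x}$ shows that every object of the other component is supported in $\PBs|\omega_X|=\varnothing$. Here one uses that $\Hom$ in either direction against the self-dual Koszul complex detects support.
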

    
    \begin{proof}
    Over $\mathbb C$, the assertion for $\Perf(X)$ is already the empty-locus case of Lin's singular paracanonical support theorem \cite[Section~3.4]{Lin}, which builds on Spence's Cohen--Macaulay criterion \cite[Theorem~3.1]{Spence}. We recall the argument to record that it is valid over an arbitrary algebraically closed field.
    
    Suppose $\Perf(X)=\langle\cA,\cB\rangle$. By \cite[Theorem~3.9]{KawataniOkawa}, $\cA\otimes L=\cA$ for every $L\in\Pic^0(X)(k)$.  Since $\cB={}^\perp\cA$ and tensoring by $L$ is an autoequivalence preserving $\cA$, it follows that $\cB\otimes L=\cB$.
    
    We explain the modification of Spence's proof.  Let $x$ be a closed point outside $\PBs|\omega_X|$, and choose $L\in\Pic^0(X)(k)$ and $s\in H^0(X,\omega_X\otimes L)$ with $s(x)\neq0$.  Since $X$ is Cohen--Macaulay, there is a Koszul zero-cycle $Z_x$ supported at $x$, with $\cO_{Z_x}\in\Perf(X)$.  Write its decomposition triangle as
    \[
      B_x\to\cO_{Z_x}\to A_x  \xrightarrow{f}B_x[1],
      \quad\text{where}\quad 
      A_x\in\cA,\quad B_x\in\cB.
    \]
    Multiplication by $s$ and Grothendieck duality give
    \begin{equation*}
      \Hom\bigl(A_x,B_x\otimes\omega_X\otimes L[1]\bigr) 
      \simeq \Hom\bigl(B_x\otimes L,A_x[\dim X-1]\bigr)^\vee=0.
    \end{equation*}
    The last equality uses $B_x\otimes L\in\cB$ and semiorthogonality. Hence the composite of $f$ with multiplication by $s$ is zero.  On an open neighborhood of $x$ where $s$ is invertible, the morphism $f$ vanishes.
    
    The rest of \cite[Theorem~3.1]{Spence} now applies verbatim: every Koszul zero-cycle outside $\PBs|\omega_X|$ belongs to exactly one component, all such cycles belong to the same component because $X$ is integral, and every object of the other component is supported on $\PBs|\omega_X|$.  If this locus is empty, the other component is zero.
    

    Thus $\Perf(X)$ is semiorthogonally indecomposable. Since every algebraically closed field is perfect, \cite[Corollary~6.6]{KuznetsovShinder} implies that $\Db(X)$ is semiorthogonally indecomposable.
    \end{proof}
    
    We now prove the main theorem by showing that the paracanonical base locus is empty.
    
    \begin{proposition}\label{prop:pbs-empty}
    For $1\leq n\leq g-1$, one has
    \[
      \PBs  |\omega_{C^{[n]}}|=\varnothing.
    \]
    \end{proposition}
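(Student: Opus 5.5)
Fix a closed point $Z\in C^{[n]}$; we must produce $L\in\Pic^0(C^{[n]})(k)$ and a section of $\omega_{C^{[n]}}\otimes L$ not vanishing at $Z$. Since $C_{\mathrm{sm}}$ is a dense open subset and $\Supp(Z)$ is finite, we first choose a smooth point $p\in C_{\mathrm{sm}}(k)\setminus\Supp(Z)$. We then use Proposition~\ref{prop:canonical} with this $p$:
\[
  \omega_{C^{[n]}}\simeq\cO_{C^{[n]}}\bigl((g-1-n)D_p\bigr)\otimes a_n^*\cO_{\Jbar}(\Theta_p).
\]
The first factor has its canonical section $s_{D_p}^{\,g-1-n}$, because $g-1-n\geq0$. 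This section does not vanish at $Z$, since $p\notin Z$ and hence $Z\notin D_p$.

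For the second factor, we translate the theta divisor. For $M\in\Pic^0(C)(k)$, tensoring by $M$ gives an automorphism $t_M$ of $\Jbar$. Using the rigidified universal sheaf, $t_M^*\cJ\simeq\cJ\otimes r^*M$ up to a line bundle pulled back from $\Jbar$, and that line bundle is trivial by the rigidification along $p\times\Jbar$. Hence $t_M^*\cO_{\Jbar}(\Theta_p)$ is the determinant line bundle attached to $\cJ\otimes r^*(M((g-1)p))$. We will argue that $t_M^*\cO(\Theta_p)\otimes\cO(\Theta_p)^{-1}$ is algebraically trivial on $\Jbar$: $M$ varies in the connected group $\Pic^0(C)$, and the family of determinant line bundles over $\Pic^0(C)\times\Jbar$ restricts to $\cO(\Theta_p)$ at $M=\cO_C$. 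It follows that $L:=a_n^*\bigl(t_M^*\cO(\Theta_p)\otimes\cO(\Theta_p)^{-1}\bigr)\in\Pic^0(C^{[n]})(k)$, because the pullback of an algebraically trivial bundle is algebraically trivial. The bundle $t_M^*\cO(\Theta_p)$ carries the canonical determinant section $\theta_M$, and its zero locus is $\{[I]: H^0(C,I\otimes M((g-1)p))\neq0\}$.

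We now choose $M$ so that $\theta_M$ does not vanish at $a_n(Z)=\cL_Z(-np)$. This amounts to requiring
\[
  H^0\bigl(C,\cL_Z((g-1-n)p)\otimes M\bigr)=0.
\]
Set $F:=\cL_Z((g-1-n)p)$. It is rank-one torsion-free of degree $g-1$, so Lemma~\ref{lem:translation} (the case $e=g-1$) supplies exactly such an $M$. Then $s_{D_p}^{\,g-1-n}\cdot a_n^*\theta_M$ is a section of $\omega_{C^{[n]}}\otimes L$ that is nonzero at $Z$. Since $Z$ was an arbitrary closed point, $\PBs|\omega_{C^{[n]}}|=\varnothing$.

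The main obstacle is the algebraic triviality of the translated theta difference, i.e.\ the compatibility of $t_M$ with the determinant construction. We plan to handle it by working over $\Pic^0(C)\times\Jbar$. There one forms $\det$ of the derived dual of $R\,\mathrm{pr}_*$ applied to $\mathcal P\otimes\cJ\otimes r^*\cO((g-1)p)$, where $\mathcal P$ is a Poincaré bundle normalized at $p$. This line bundle has a determinant section whose restriction to the slice $\{M\}\times\Jbar$ is $\theta_M$. Connectedness of $\Pic^0(C)$ then gives algebraic equivalence of the slices. An alternative route avoids translations on $\Jbar$ altogether: twist the universal sheaf by $r^*M$, which leaves the isomorphism \eqref{eq:projectivization} unchanged but replaces $\Theta_p$ by $\Theta_{p,M}$, and apply Proposition~\ref{prop:canonical} with the modified theta.
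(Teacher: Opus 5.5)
Your main argument is correct and essentially matches the paper's proof: choose a smooth $p\notin\Supp(Z)$, apply Lemma~\ref{lem:translation} to $\cL_Z((g-1-n)p)$, and multiply $s_{D_p}^{\,g-1-n}$ by the translated theta section $a_n^*\theta_M$, with the algebraic triviality of $t_M^*\cO_{\Jbar}(\Theta_p)\otimes\cO_{\Jbar}(\Theta_p)^{-1}$ coming from a family over the connected group $\Pic^0(C)$ (your Poincar\'e-twisted determinant bundle on $\Pic^0(C)\times\Jbar$ is the same line bundle as the paper's $\mathrm{act}^*\cO_{\Jbar}(\Theta_p)$). You should drop the closing ``alternative route'', however: replacing $\cJ$ by $\cJ\otimes r^*M$ turns $\cE_n$ into $t_M^*\cE_n$, whose projectivization is $C^{[n]}$ with structure map $t_{M^{-1}}\circ a_n$ rather than $a_n$, so Proposition~\ref{prop:canonical} just returns $a_n^*\cO_{\Jbar}(\Theta_p)$ with the untwisted theta section and no paracanonical twist is gained.
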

    
    \begin{proof}
    The strategy is the singular-curve analogue of Lin's argument for symmetric products of smooth curves \cite[Section~4.1]{Lin}: we move the incidence divisor and the theta divisor independently so that both avoid a prescribed point.
    
    Fix $Z\in C^{[n]}$.  Choose a smooth point $p\notin\Supp(Z)$, and use this point in the constructions of Section~\ref{sec:abel}; equivalently, use \eqref{eq:abel}--\eqref{eq:canonical} with this choice of $p$. Let
    \[
      F_Z:=\cL_Z\bigl((g-1-n)p\bigr).
    \]
    This is rank-one torsion-free of degree $g-1$.  By Lemma~\ref{lem:translation}, there is $M\in\Pic^0(C)(k)$ such that
    \begin{equation}\label{eq:vanishing-at-Z}
      H^0(C,F_Z\otimes M)=0.
    \end{equation}
    
    Let $t_M:\Jbar\to\Jbar$ be tensorization by $M$.  From \eqref{eq:abel}, \eqref{eq:theta}, and \eqref{eq:vanishing-at-Z}, we obtain
    \(
      t_M(a_n(Z))\notin\Theta_p.
    \)
    Let $\vartheta_p$ be the canonical section of $\cO_{\Jbar}(\Theta_p)$ and $\sigma_p$ the canonical section of $\cO_{C^{[n]}}(D_p)$.  Then
    \begin{equation}\label{eq:paracanonical-section}
      \sigma_p^{\,g-1-n}\cdot a_n^*(t_M^*\vartheta_p)
    \end{equation}
    is a section of
    \[
      \cO_{C^{[n]}}\bigl((g-1-n)D_p\bigr)
      \otimes a_n^*t_M^*\cO_{\Jbar}(\Theta_p).
    \]
    It does not vanish at $Z$: the first factor is nonzero because $p\notin\Supp(Z)$, and the second is nonzero by the choice of $M$.
    
    The line bundle
    \[
      Q_M:=t_M^*\cO_{\Jbar}(\Theta_p)
      \otimes\cO_{\Jbar}(\Theta_p)^{-1}
    \]
    is algebraically trivial.  
    Indeed, let  $\mathrm{act}\colon \Pic^0(C)\times\Jbar\to\Jbar$,  $(M,I)\mapsto I\otimes M$ be the tensor action and consider the line bundle
    \[
    \mathrm{act}^*\cO_{\Jbar}(\Theta_p)
    \otimes\operatorname{pr}_2^*\cO_{\Jbar}(\Theta_p)^{-1}.
    \]
    Its restriction over $M$ is $Q_M$, while its restriction over the identity is trivial.  
    Since $\Pic^0(C)$ is connected, this shows $Q_M$ is  algebraically trivial.
    Pulling the same family back along
    \[\mathrm{id} \times a_n:
      \Pic^0(C)\times C^{[n]}\to 
      \Pic^0(C)\times\Jbar
    \]
    shows that $P_M:=a_n^*Q_M$ lies in $\Pic^0(C^{[n]})(k)$.
    By Proposition~\ref{prop:canonical}, the section \eqref{eq:paracanonical-section} belongs to
    \[
      H^0\bigl(C^{[n]},\omega_{C^{[n]}}\otimes P_M\bigr)
    \]
    and is nonzero at $Z$.  Since $Z$ was arbitrary, the paracanonical base locus is empty.
    \end{proof}
    
    \begin{proof}[Proof of Theorem~\ref{thm:main}]
    The scheme $C^{[n]}$ is integral, projective, and a local complete intersection, hence Gorenstein.  Combining Proposition~\ref{prop:singular-criterion} and  Proposition~\ref{prop:pbs-empty} yields the result.
    \end{proof}

    \section{Relative Semiorthogonal Indecomposability}
    \label{sec:relative}
    We conclude with the relative version of Theorem~\ref{thm:main}. 
    Related approaches include Okawa's study of $f$-linear semiorthogonal decompositions via relative canonical base loci \cite{OkawaIrregular}, Pirozhkov's notion of noncommutative stable semiorthogonal indecomposability \cite{Pirozhkov}, and the moduli-theoretic study of semiorthogonal decompositions in smooth proper families by Belmans, Okawa, and Ricolfi \cite{BelmansOkawaRicolfi}. Our argument instead uses a direct fiberwise criterion.

    Throughout this section, we fix a connected separated noetherian base scheme $S$ of finite Krull dimension.
    Let $f \colon X \to S$ be a flat morphism. For any geometric point $\overline{s}$ of $S$, we form the Cartesian diagram
    \begin{equation*}
    	\begin{tikzcd}		
    	X_{\overline{s}} =X \times_S \{\overline{s}\} \ar{d}{f_{\overline{s}}} \ar{r}{i_{X_{\overline{s}}}} & X \ar{d}{f} \\
    		\{\overline{s}\} \ar{r}{i_{\overline{s}}} & S,
    	\end{tikzcd}
    	\end{equation*}
    and for an object $F \in \Perf(X)$, we write $F|_{X_{\overline{s}}} := L i_{X_{\overline{s}}}^* F$ for its derived pullback.
   
    Recall that a full triangulated subcategory of $\Perf(X)$ or $\Db(X)$ is called \emph{$S$-linear} if it is stable under tensoring with derived pullbacks of objects of $\Perf(S)$.     

    \begin{lemma}[Fiberwise criterion]
    \label{lem:fiberwise-criterion}
    Let $f\colon X\to S$ be a faithfully flat proper morphism.
    If $\Perf(X_{\overline{s}})$ admits no  nontrivial semiorthogonal decomposition for every geometric point $\overline{s} \to S$,  then $\Perf(X)$ admits no nontrivial $S$-linear semiorthogonal decomposition. Moreover, $\Db(X)$ admits no nontrivial $S$-linear semiorthogonal decomposition with admissible components. 
    \end{lemma}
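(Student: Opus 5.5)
The plan is to base change an $S$-linear decomposition to each geometric fiber, use fiberwise indecomposability to see that one component vanishes on every fiber, and then use connectedness of $S$ and a support argument to conclude that this component is zero globally.

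\emph{Base change.} Suppose $\Perf(X)=\langle\cA,\cB\rangle$ is an $S$-linear semiorthogonal decomposition. For a geometric point $\overline{s}\to S$ we use Kuznetsov's base change for linear semiorthogonal decompositions (flat $f$, with $\overline{s}\to S$ not flat, in the perfect setting via tensor products of small linear categories). This gives a semiorthogonal decomposition
\[
\Perf(X_{\overline{s}})=\langle\cA_{\overline{s}},\cB_{\overline{s}}\rangle ,
\]
where $\cA_{\overline{s}}$ is the thick closure of the objects $A|_{X_{\overline{s}}}$ for $A\in\cA$, and similarly for $\cB_{\overline{s}}$. Generation is clear, since pullbacks of perfect complexes on $X$ generate $\Perf(X_{\overline{s}})$ (for instance, $X$ has an ample family, as $f$ is proper over a noetherian base). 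Semiorthogonality follows from the projection formula:
\[
f_{\overline{s}*}\cHom(B|_{\overline{s}},A|_{\overline{s}})\simeq Li_{\overline{s}}^*\,f_*\cHom(B,A).
\]
Here $S$-linearity forces $f_*\cHom(B,A)=0$, because $\Hom(B,A\otimes f^*P)=0$ for all $P\in\Perf(S)$ and $S$ has enough perfect complexes.

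\emph{Fiberwise vanishing and openness.} By hypothesis, for each $\overline{s}$ either $\cA_{\overline{s}}=0$ or $\cB_{\overline{s}}=0$. Let $U_\cA$ be the set of points of $S$ where $\cA_{\overline{s}}=0$, and define $U_\cB$ similarly. These two sets cover $S$ and are disjoint, since $X_{\overline{s}}\neq\varnothing$ by faithful flatness, so the two components cannot both vanish. I would show that both sets are closed (equivalently, both open) by a support argument. The projection functor onto $\cA$ is given by a kernel $K_\cA\in\Perf(X\times_SX)$, and its formation commutes with base change. Hence $U_\cB$ is exactly the locus of points over which $K_\cA$ restricts to zero on the fiber, which is the complement of the image of $\Supp K_\cA$. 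That image is closed because $X\times_SX\to S$ is proper, so $U_\cB$ is open; by symmetry $U_\cA$ is open too. Since $S$ is connected, one of the two sets is all of $S$, say $\cB_{\overline{s}}=0$ for every $\overline{s}$.

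\emph{Global vanishing.} For $B\in\cB$, derived Nakayama (using that $B$ is perfect and that every fiber restriction of $B$ vanishes) gives $\Supp B\cap X_{\overline{s}}=\varnothing$ for all $\overline{s}$. As $f$ is surjective, $B=0$. This shows $\Perf(X)$ has no nontrivial $S$-linear semiorthogonal decomposition.

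\emph{The $\Db$ case.} Given an $S$-linear decomposition $\Db(X)=\langle\cA,\cB\rangle$ with admissible components, I would intersect with $\Perf(X)$. By the theory of Kuznetsov--Shinder, admissible $S$-linear components of $\Db$ induce a decomposition of $\Perf$, because the projection functors of admissible components preserve perfect complexes in the linear setting. By the previous step the induced decomposition of $\Perf(X)$ is trivial, say $\cB\cap\Perf(X)=0$. Then $\cB$ is right-orthogonal to a set generating $\Perf(X)$, so every $B\in\cB$ satisfies $\Hom(P,B)=0$ for all perfect $P$, and hence $B=0$.

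\emph{Main obstacle.} I expect the hardest step to be the base change and openness step, namely producing a kernel for the projection functor and checking its compatibility with non-flat base change. If that route fails, I would instead show the Zariski-local version directly: over each point $s$, nonvanishing of $\cA$ near $s$ is detected by the fiber restriction of some object of $\cA$, and this nonvanishing spreads out to an open neighbourhood by semicontinuity of supports.
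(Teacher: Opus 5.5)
There is a genuine gap in the step where you show that $U_\cA$ and $U_\cB$ are open. Your base-change step is fine: $S$-linearity forces $f_*R\mathcal{H}\!om(B,A)=0$, and flat base change then gives semiorthogonality on the fibers.

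\textbf{The openness step.} The claim that the projection onto $\cA$ has a kernel $K_\cA\in\Perf(X\times_S X)$ is false whenever $X\to S$ is not smooth, which is exactly the situation of the paper. Already for $S=\operatorname{Spec}k$ and the trivial decomposition $\cA=\Perf(X)$, the kernel is $\cO_\Delta$. Its derived restriction to $X\times\{x\}$ is $k(x)$, which is not perfect at a singular point $x$.

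Rescuing this route would require several things that are not available for free:
\begin{itemize}
\item a non-perfect kernel;
\item its compatibility with the non-flat base change $\overline{s}\to S$;
\item enough coherence to make its support closed and to detect fiberwise vanishing.
\end{itemize}

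Your fallback has the semicontinuity backwards. For a single $A\in\cA$, the locus $\{t : A|_{X_{\overline t}}\neq 0\}$ equals $f(\Supp A)$, which is closed, not open. Moreover, a union of such closed sets over all $A\in\cA$ need not be closed.

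The missing idea is to use one object that controls the whole component. Take a perfect $P$ that generates $\mathrm D_{\mathrm{qc}}(X)$ and thickly generates $\Perf(X)$, with decomposition triangle $P_\cB\to P\to P_\cA$. Then:
\begin{itemize}
\item $P_\cA$ thickly generates $\cA$, since the projection is exact, preserves summands, and is the identity on $\cA$.
\item Hence $\cA_{\overline s}$ is thickly generated by $P_\cA|_{X_{\overline s}}$.
\item Derived Nakayama gives $\{s:\cA_{\overline s}\neq 0\}=f(\Supp P_\cA)$, and similarly $\{s:\cB_{\overline s}\neq 0\}=f(\Supp P_\cB)$.
\end{itemize}
Both sets are closed by properness and they partition the connected $S$, so one is empty. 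This also gives the global vanishing at once. This is the paper's argument.

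\textbf{The $\Db$ step.} This step has an orientation error. With your convention $\Hom(\cB,\cA)=0$, the case $\cB\cap\Perf(X)=0$ means $\Perf(X)\subseteq\cA$. Semiorthogonality then gives $\Hom(B,P)=0$, not $\Hom(P,B)=0$. So $\cB$ is left-orthogonal to $\Perf(X)$, and compact generation does not apply.

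The two cases are not symmetric, so ``say'' is not a harmless normalization. Your reasoning only handles the case $\cA\cap\Perf(X)=0$. In the case you chose, you must use that $\cB$ is admissible:
\begin{itemize}
\item Its left adjoint $i_\cB^*$ is the projection for the strong $S$-linear decomposition $\Db(X)=\langle\cB,{}^\perp\cB\rangle$, so it preserves perfect complexes.
\item Hence $i_\cB^*P\in\cB\cap\Perf(X)=0$.
\item So $\Hom(P,B[j])\simeq\Hom(i_\cB^*P,B[j])=0$ for all $B\in\cB$ and all $j$, which forces $\cB=0$ because $P$ generates $\mathrm D_{\mathrm{qc}}(X)$.
\end{itemize}
This is precisely where the paper uses admissibility a second time.

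\textbf{A minor point.} ``Proper over a noetherian base'' does not give an ample family. Your generation claim on fibers is still true: $X_{\overline s}\to X$ is affine because $S$ is separated, so the pullback of a compact generator is a compact generator.
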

    
    \begin{proof}
    Suppose $\Perf(X)=\langle\cA,\cB\rangle$ is an $S$-linear semiorthogonal decomposition. Since $X$ is quasi-compact and quasi-separated, there exists a perfect complex $P$ which generates $\mathrm{D_{qc}}(X)$ and thickly generates $\Perf(X)$; see \cite[Theorem~3.1.1]{BondalVanDenBergh}. Let 
    \[ P_{\cB} \to P \to P_{\cA} \to P_{\cB}[1]
    \]
    be the decomposation triangle associated with $\Perf(X)=\langle\cA,\cB\rangle$. Since the projection functors are exact and preserve direct summands, $\cA$ (resp. $\cB$) is thickly generated by $P_{\cA}$ (resp. $P_{\cB}$).
    
    For any geometric point $i_{\overline{s}} \colon \overline{s} \to S$, faithfully flat base-change (\cite[Proposition~3.15]{BLM+}, \cite[Proposition~5.1]{KuznetsovBaseChange}) yields a semiorthogonal decomposition
    \[
    \Perf(X_{\overline{s}}) = \langle \cA_{\overline{s}}, \cB_{\overline{s}}\rangle,
    \]
    where $\cA_{\overline{s}}$ (resp. $\cB_{\overline{s}}$) is the full triangulated subcategory of $\Perf(X_{\overline{s}})$ thickly generated by $(P_{\cA})|_{X_{\overline{s}}}$ (resp. $(P_{\cB})|_{X_{\overline{s}}}$). Therefore, the condition of the lemma implies that precisely one of $\cA_{\overline{s}}$ and $\cB_{\overline{s}}$ is  zero for each $\overline{s}$. Denote $Z_{\cA}$ (resp. $Z_{\cB}$) the subset of $S$ consisting of $s\in S$ such that $\cA_{\overline{s}}\not\simeq 0$ (resp. $\cB_{\overline{s}} \not\simeq 0$ ). We thus obtain a disjoint union $S = Z_{\cA} \sqcup Z_{\cB}$. 
    
    On the other hand, $Z_{\cA}$ (resp.\ $Z_{\cB}$) coincides with the image under $f$ of the support of the perfect complex $P_{\cA}$ (resp.\ $P_{\cB}$). Since $f$ is proper, both subsets are closed. As $S$ is connected, it follows that $Z_{\cA} = \varnothing$ or $Z_{\cB} = \varnothing$, and consequently $\cA \simeq 0$ or $\cB \simeq 0$.
    
    Now suppose that $\Db(X)=\langle\cA,\cB\rangle$ is an admissible $S$-linear semiorthogonal decomposition. Since it is strong, \cite[Lemma~3.10]{BLM+} induces
    \[\Perf(X)=\langle\cA_{\mathrm{perf}},\cB_{\mathrm{perf}}\rangle, \quad\text{where}\quad
    \cA_{\mathrm{perf}}:=\cA\cap\Perf(X),
      \quad  \cB_{\mathrm{perf}}:=\cB\cap\Perf(X).
    \]
    The previous result for perfect complexes shows that $\cA_{\mathrm{perf}}\simeq0$ or $\cB_{\mathrm{perf}}\simeq 0$. 
    
    It remains to prove that, for any admissible $S$-linear subcategory $\mathcal{C}\subseteq\Db(X)$, the vanishing of
    $\mathcal{C}_{\mathrm{perf}}
      :=
      \mathcal{C}\cap\Perf(X)$
    implies $\mathcal{C} \simeq 0$. Let $i_{\mathcal{C}}\colon\mathcal{C}\hookrightarrow\Db(X)$ denote the inclusion, with left adjoint $i_{\mathcal{C}}^*$. 
    Applying \cite[Lemma~3.10]{BLM+} to the strong $S$-linear decomposition $\Db(X)=\langle\mathcal C,{}^\perp\mathcal C\rangle$ and using the uniqueness of decomposition triangles, we obtain
    \[
    i_{\mathcal{C}}^*P
      \in
      \mathcal{C}_{\mathrm{perf}} \simeq 0.
    \]
    Therefore, $i_{\mathcal{C}}^*P \simeq 0$. Thus, for every $E\in\mathcal{C}$ and every $j\in\mathbb Z$, we have
    \[
      \Hom_{\Db(X)}(P,E[j])
      \simeq
      \Hom_{\mathcal{C}}\bigl(i_{\mathcal{C}}^*P,E[j]\bigr)
      \simeq
      0.
    \]
    Since $P$ generates $\mathrm D_{\mathrm{qc}}(X)$, this implies $E\simeq 0$. Therefore $\mathcal{C}\simeq 0$.
    \end{proof}
    
    Let $\pi\colon\mathscr{C}\to S$ be a flat projective family of geometrically integral locally planar curves of arithmetic genus $g \ge 1$. 
    We denote by $\Hilb^n_{\mathscr{C}/S}$ the relative Hilbert scheme parametrizing closed subschemes $Z\subseteq\mathscr{C}_T$ that are finite and flat of degree $n$ over an $S$-scheme $T$.
    
    \begin{theorem}\label{thm:relative}
    For every $1\leq n\leq g-1$,  $\Perf(\Hilb_{\mathscr{C}/S}^n)$ admits no nontrivial $S$-linear semiorthogonal decomposition, and  $\Db(\Hilb_{\mathscr{C}/S}^n)$ admits no nontrivial $S$-linear semiorthogonal decomposition with admissible components.
    \end{theorem}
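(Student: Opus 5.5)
The plan is to apply the fiberwise criterion, Lemma~\ref{lem:fiberwise-criterion}, to the structure morphism $f\colon \Hilb^n_{\mathscr{C}/S}\to S$. This requires three inputs: $f$ is proper, $f$ is faithfully flat, and every geometric fiber has semiorthogonally indecomposable $\Perf$. Once these are in place, both assertions of the theorem, for $\Perf$ and for admissible $S$-linear decompositions of $\Db$, follow verbatim from the lemma.

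First, properness and the identification of fibers. Since $\pi$ is flat and projective, the relative Hilbert scheme $\Hilb^n_{\mathscr{C}/S}$ exists and is projective over $S$; in particular it is proper. Its formation commutes with base change, so for any geometric point $\overline{s}\to S$ the fiber $X_{\overline{s}}$ is $\Hilb^n(\mathscr{C}_{\overline{s}})$. Here $\mathscr{C}_{\overline{s}}$ is an integral projective curve of arithmetic genus $g$ with locally planar singularities over an algebraically closed field. Theorem~\ref{thm:main}, or equivalently Proposition~\ref{prop:singular-criterion} combined with Proposition~\ref{prop:pbs-empty}, applies in arbitrary characteristic because $1\le n\le g-1$. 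It shows that $\Perf(X_{\overline{s}})$ has no nontrivial semiorthogonal decomposition. For $g=1$ the range is empty, so we may assume $g\ge2$ as before.

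Second, faithful flatness; this is the step I expect to need the most care. Surjectivity is easy, since each fiber $\Hilb^n(\mathscr{C}_{\overline{s}})$ is nonempty (take $n$ distinct smooth points). For flatness, I would use the result of Altman--Iarrobino--Kleiman \cite{AltmanIarrobinoKleiman}: for a flat family of curves with locally planar singularities, the relative Hilbert scheme of points is flat over the base, with fibers that are local complete intersections of dimension $n$. If a self-contained argument is preferred, I would argue locally. Near a point of $\mathscr{C}$ the family embeds in a smooth surface over $S$, at least étale locally. The Hilbert scheme of points of a smooth surface over $S$ is smooth of relative dimension $2n$ (Fogarty, relative version). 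The relative Hilbert scheme of the curve is cut out inside it by $n$ equations, namely the section of the rank-$n$ tautological bundle induced by the local equation of the curve. Every fiber has dimension exactly $n$, so these equations form a regular sequence fiberwise. By the local criterion of flatness (miracle flatness for a relative complete intersection whose fibers have the expected dimension), $\Hilb^n_{\mathscr{C}/S}$ is flat over $S$.

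Third, conclude. With $f$ flat, surjective and proper, and every geometric fiber semiorthogonally indecomposable, Lemma~\ref{lem:fiberwise-criterion} says that $\Perf(\Hilb^n_{\mathscr{C}/S})$ has no nontrivial $S$-linear semiorthogonal decomposition, and that $\Db(\Hilb^n_{\mathscr{C}/S})$ has none with admissible components. The standing hypotheses on $S$ (connected, separated, noetherian, of finite Krull dimension) are exactly those under which the lemma was proved. The only subtlety is the flatness input, and I would rely on \cite{AltmanIarrobinoKleiman} for it.
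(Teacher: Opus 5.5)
Your proof is correct and follows the paper's argument. The paper also combines the fiberwise criterion (Lemma~\ref{lem:fiberwise-criterion}) with Theorem~\ref{thm:main}, using that the relative Hilbert scheme is projective, commutes with base change, and has $\Hilb^n_{\mathscr{C}/S}\to S$ flat lci of relative dimension $n$ (Altman--Iarrobino--Kleiman, via Jiang's Lemma~4.7); your explicit check of surjectivity via nonempty fibers is a detail the paper leaves implicit when it invokes faithful flatness.
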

    \begin{proof}
     The relative Hilbert scheme $\Hilb_{\mathscr{C}/S}^n$ is projective over $S$, and its formation commutes with arbitrary base change. Moreover, by \cite[Lemma~4.7]{JiangAbel}, following \cite{AltmanIarrobinoKleiman}, $\Hilb_{\mathscr{C}/S}^n \to S$ is a flat and locally complete intersection morphism of relative dimension $n$. Hence, for every geometric point $\overline{s} \to S$, we have
    \[(\Hilb_{\mathscr{C}/S}^n)_{\overline{s}} \simeq \mathscr{C}_{\overline{s}}^{[n]}.\]
    The result now follows from Theorem~\ref{thm:main} and Lemma~\ref{lem:fiberwise-criterion}.
    \end{proof}


    \end{document}